\documentclass[10pt, psamsfonts]{amsart}

\usepackage[a4paper,margin=4cm]{geometry}
\usepackage{amsfonts,amssymb,amscd}
\usepackage{mathpazo}
\usepackage{hyperref} 

\newtheorem{theorem}{Theorem}[section]
\newtheorem{lemma}[theorem]{Lemma}
\newtheorem{corollary}[theorem]{Corollary}
\newtheorem{proposition}[theorem]{Proposition}

\newtheorem{definition}{Definition}

\theoremstyle{remark} 

\newtheorem{remark}[theorem]{Remark}

\renewcommand{\leq}{\leqslant}
\renewcommand{\geq}{\geqslant}

\newcommand{\ptl}{\partial}

\newcommand{\ga}{\gamma}

\newcommand{\sg}{\sigma}
\newcommand{\escpr}[1]{\left< #1\right>}

\newcommand{\rr}{\mathbb{R}}
\newcommand{\sph}{\mathbb{S}}
\newcommand{\nn}{\mathbb{N}}

\makeatletter
\makeatother

\setlength{\parskip}{0.5em}


\begin{document}

\title[Morse index of CMC tori of revolution in $\sph^3$]{A new bound on
the Morse index of constant mean curvature tori of revolution in $\sph^3$}

\author[A. Ca\~nete]{Antonio Ca\~nete}
\address{Departamento de Matem\'atica Aplicada I \\ Campus de Reina Mercedes \\ Universidad de Sevilla \\ E-41012\  Sevilla (Espa\~na)}
\email{antonioc@us.es}

\bibliographystyle{plain} 

\thanks{This work was partially supported by MEC research project
MTM2007-61919}
\subjclass[2000]{53C42, 53A10.}
\keywords{Morse index, Jacobi operator, stability, constant mean curvature.}

\begin{abstract}
In this work we give a new lower bound on the Morse index for constant mean curvature tori of
revolution immersed in the three-sphere $\sph^3$,
by computing some explicit negative eigenvalues for the corresponding Jacobi operator.
\end{abstract}

\maketitle

\thispagestyle{empty}
\section*{Introduction}
\label{sec:introduction}

Given a closed surface $M$ with \emph{constant mean curvature} (CMC),
immersed in a three-dimensional manifold $N$, it is well known that
$M$ is a critical point of the area functional, when we consider variations
preserving the enclosed volume \cite{bcs}, \cite{bb}. For this kind of surfaces,
we can discuss the \emph{stability} by studying the second variation of the area,
which can be expressed by means of an useful and classical functional operator $L$;
more precisely, denoting by $f\in\mathcal{C}^{\infty}(M,\rr)$ to the normal component of
the vector field associated to a variation of $M$,
the second variation formula will be given by
\begin{equation}
\label{eq:second}
-\int_M f\,Lf\,da,
\end{equation}
where $L=\Delta+\overline{R}+|\sg|^2$, $\Delta$ is the Laplacian operator
of $M$, $\overline{R}$ is the Ricci curvature of $N$, and $\sg$ is the second
fundamental form of $M$.
Such operator $L:\mathcal{C}^{\infty}(M,\rr)\to\mathcal{C}^{\infty}(M,\rr)$
is usually called the \emph{Jacobi} (or stability) \emph{operator}.

For CMC surfaces, $M$ is said to be \emph{stable}
if and only if the above expression \eqref{eq:second}
is greater than or equal to zero, for any volume-preserving variation \cite[\S~2]{bcs}.
In this setting, a usual way of measuring the \emph{instability} of a surface $M$
is given by the \emph{Morse index} of $M$ (see for instance \cite{abp}, \cite{rs}), which is defined as
the number of negative eigenvalues, counted with their multiplicities, of the Jacobi operator $L$
(throughout this paper, a value $\lambda\in\rr$ will be an eigenvalue of $L$ if there exists a function
$f_\lambda\in\mathcal{C}^{\infty}(M,\rr)$ such that $Lf_\lambda+\lambda f_\lambda=0$).

This intrinsic relation with stability (see \cite{lr} for further details) has stimulated
the study of the Morse index of CMC surfaces in several works.
The main approaches have focused on the minimal case, that is, surfaces with zero mean curvature
(see \cite{montielros}, \cite{ek}, \cite{naya}, \cite{tysk}, \cite{choe}).
In the euclidean space $\rr^3$, for instance, planes have zero index \cite{peng},
meanwhile
catenoids and Enneper's surfaces have index one \cite{fc}, \cite{lr}.
In the sphere $\sph^3$, an interesting result from J. Simons \cite{simons} states that the index
of any compact minimal surface $M$ is always greater than or equal to one,
with equality if and only if $M$ is a totally geodesic $2$-sphere
(in fact, such a result was stated in general dimension).
Later on, F. Urbano \cite{urbano} proved that any compact minimal surface (not totally geodesic)
in $\sph^3$ has index greater than or equal to five, with equality uniquely for the Clifford torus.
The analogous result in the $n$-dimensional case has been partially demonstrated in \cite{elsoufi}, \cite{guada}, \cite{perdomo}. A nice review of these results can be found in \cite{alias}.

However, in the family of nonminimal CMC surfaces, the Morse index has been less discussed in literature,
and the main results refer to bounds for the index.
In $\rr^3$, apart from the spheres (which have index one), only lower bounds for tori of revolution \cite{rossman-tori}, and upper and lower bounds for
the Wente tori \cite{wente} are known.
Moreover, for CMC immersions of revolution in $\sph^3$, W. Rossman and N. Sultana \cite{rs}
have recently computed the index of flat tori of revolution (in terms of the mean curvature),
and have also found a lower bound for non-flat tori,
giving as well numerical methods for explicit calculations \cite{rs2}. It follows from
their work that the index of these tori is at least five, in any case.

In this work, we focus on the index of CMC tori of revolution immersed in $\sph^3$.
By using an approach different from the one used in \cite{rs}, we shall explicitly find
some negative eigenvalues for the Jacobi operator of these surfaces,
obtaining new bounds for the Morse index.
Our technique implies a suitable arrangement of the metric of the surface.
This will lead us to have a nice expression of the Jacobi operator,
which allows to determine some specific eigenfunctions and eigenvalues.
We remark that these eigenvalues will depend on some geometric quantities associated to the surface
(as the energy and the mean curvature), and we will discuss analytically their sign,
improving in most of the cases the previous known bounds for the Morse index (see Theorem~\ref{th:main}).
For instance, we obtain that when the mean curvature of the torus is less than $-1$ or greater than $3/2$,
the Morse index is, at least, eight.


We point out that an interesting question, in this CMC setting, is finding a similar result to that of F. Urbano,
who characterized Clifford tori in $\sph^3$ by means of a minimum value of the Morse index \cite{urbano}.
Some partial progresses have been made in this direction \cite{abp}.

\emph{Acknowledgments.}  The author would like to thank Manuel Ritor\'e
for his continuous support and kind help during the elaboration of these notes.

\section{Preliminaries}
Consider a torus of revolution $M$ with constant mean curvature $H$,
immersed in the $3$-dimensional unit sphere $\sph^3\subset\rr^4$.
Let us fix a geodesic curve $\ell$ in $\sph^3$, given by
\[
\ell=\{(\cos(t),\sin(t),0,0),\ t\in\rr\}.
\]
Then, the torus can be seen as generated by the rotation about $\ell$ of a given curve
parameterized by arc-length
$$\ga:[0,t_0]\longrightarrow\sph^3$$
defined by
\[
\ga(t)=(\ga_1(t),\ga_2(t),\ga_3(t),0), \quad t\in[0,t_0].
\]
We can further assume that $\ga_3(t)>0$ for all $t$.

\begin{remark}
The description of the generating curve of a CMC torus of revolution
can be found in~\cite[Th.~3]{hsiang} (see also~\cite[\S~1]{rs}),
being unduloidal or nodoidal.
\end{remark}


Hence, an immersion $\psi:M\to\sph^3$ of the torus into $\sph^3$ will be given by
\[
\psi(t,\theta)=(\ga_1(t),\ga_2(t),\ga_3(t)\cos\theta,\ga_3(t)\sin\theta),\quad t\in [0,t_0],\ \theta\in[0,2\pi].
\]
Then we have that the tangent space to $\psi(t,\theta)$ is generated by the vectors
\begin{align}
\label{eq:tangentes}
\partial_t&=(\ga_1'(t),\ga_2'(t),\ga_3'(t)\cos\theta,\ga_3'(t)\sin\theta),
\\
\partial_\theta&=(0,0,-\ga_3(t)\sin\theta,\ga_3(t)\cos\theta), \notag
\end{align}
with $|\partial_t|=1$,$|\ptl_\theta|=\ga_3(t)$.
Moreover, it is straightforward checking that the unit normal vector to $\psi(t,\theta)$
will be given by
\begin{equation}
\label{eq:normal}
N(t,\theta)=
\left( \begin{array}{c}
\ga_2(t)\ga_3'(t)-\ga_2'(t)\ga_3(t)
\\
\ga_3(t)\ga_1'(t)-\ga_3'(t)\ga_1(t)
\\
(\ga_1(t)\ga_2'(t)-\ga_1'(t)\ga_2(t))\,\cos\theta
\\
(\ga_1(t)\ga_2'(t)-\ga_1'(t)\ga_2(t))\,\sin\theta
\end{array} \right)
=
\left( \begin{array}{c}
a(t)
\\
b(t)
\\
c(t)\,\cos\theta
\\
c(t)\,\sin\theta
\end{array} \right).
\end{equation}

Since $M$ is rotationally symmetric, we can consider in $M$ a metric of type
$$ds^2=dt^2+f(t)^2\,d\theta^2,$$
for certain positive function $f:M\to\rr$.
We are interested in computing the Hopf differential of our immersion $\psi$,
and so we shall need conformal (isothermal) coordinates in $M$.
In order to have this, we use the following change of coordinates:
consider a new coordinate $t'$ defined by $t=G(t')$,
where $G$ is a diffeomorphism of $\rr$ determined by $G'(t')=f(G(t'))$ and with initial condition $G(0)=0$.
Defining now $$g(t',\theta)=(G(t'),\theta),$$
it is not difficult to check that the new immersion of $M$ given by
$$(\psi\circ g)(t',\theta)=\psi((G(t'),\theta))$$
is conformal, with
$$ds^2=f(G(t'))^2\,\big((dt')^2+d\theta^2\big).$$
Observe that we have properly replaced coordinates $(t,\theta)$ with $(t',\theta)$.
It also follows from above that
\begin{equation}
\label{eq:igualdad}
|\ptl_\theta|=f(t)=\ga_3(t), \text{ for all } t.
\end{equation}


Now, by considering the flat metric $ds_0^2$ associated to the Hopf differential
of the immersion $\psi\circ g$, it follows that
\begin{equation}
\label{eq:conforme}
ds^2=\exp(2w)b^{-2}\,ds_0^2,
\end{equation}
where $b^2=4(1+H^2)$, and $w:M\to\rr$ is a smooth function defined on the torus
(see \cite[\S~1]{ritros}). 

\begin{remark}
We point out that
\begin{equation}
\label{eq:beta}
ds_0^2=\beta\,((dt')^2+d\theta^2),
\end{equation}
for certain $\beta\in\rr$, due to the flatness of the metric $ds_0^2$.
From the above expressions \eqref{eq:conforme}, \eqref{eq:beta} we get
\begin{equation}
\label{eq:betapositiva}
\beta=\exp(-2w(t'))\,b^2\,f(t)^2>0,
\end{equation}
or equivalently
\[
f(t)=\exp(w(t'))\,b^{-1}\,\beta^{1/2}.
\]
\end{remark}

\subsection{On the value of the constant $\beta$}
The value of the constant $\beta$ can be computed in terms of the
principal curvatures $k_t$, $k_\theta$ of the immersion $\psi$, taking into account the construction
of the Hopf differential. More precisely, denoting
by $\sigma'$ the second fundamental form of the conformal immersion $\psi\circ g$, we have (see~\cite[\S~1]{ritros})
\begin{equation}
\label{eq:beta1}
\beta=\sigma'(\ptl_{t'},\ptl_{t'})-\sigma'(\ptl_{\theta},\ptl_{\theta})=
f(t)^2\,(k_t - k_{\theta}),
\end{equation}
where, taking into account \eqref{eq:tangentes}, \eqref{eq:normal}, the principal curvatures are given by
\begin{align}
\label{k1}
k_\theta&=\sigma(\ptl_\theta,\ptl_\theta)=\frac{\ga_1'(t)\ga_2(t)-\ga_1(t)\ga_2'(t)}{\ga_3(t)}=\frac{-c(t)}{\ga_3(t)},
\\
k_t&=\sigma(\ptl_t,\ptl_t)=-
\left|\begin{array}{ccc}
\ga_1(t) & \ga_2(t) & \ga_3(t)
\\
\ga_1'(t) & \ga_2'(t) & \ga_3'(t)
\\
\ga_1''(t) & \ga_2''(t) & \ga_3''(t)
\end{array}\right|.  \notag
\end{align}
In order to calculate explicitly the value of $\beta$ (which we shall need later),
we will take into account some particular parametrization of the points in $\sph^3$
appearing in \cite[\S~1]{hsiang}. This will provide, in particular,
new useful expressions for the principal curvatures $k_t$, $k_\theta$.

For any point $p\in M\subset\sph^3$, there exists a point $q\in\ell$ such that
$\overline{pq}$ is a geodesic arc whose length is equal to the distance between $p$ and $\ell$.
Fixing a base point in $\ell$ and considering the arc length on $\ell$ with respect to the base point,
we can assign to $p$ new coordinates $(x,y)$, where $x$ is the coordinate of $q$ in $\ell$,
and $y$ is the length of the geodesic $\overline{pq}$.
This procedure allows to express the generating curve $\ga\subset\sph^3$ in terms of coordinates $x$, $y$.
Straightforward computations yields
\begin{align}
\label{gamma}
\ga_1(t)&=\cos(x(t))\cos(y(t)),
\\
\ga_2(t)&=\sin(x(t))\cos(y(t)), \notag
\\
\ga_3(t)&=\sin(y(t)).    \notag
\end{align}
Moreover, since the mean curvature $H$ is constant, the following relations hold (\cite[eq.~(4)]{hsiang}):
\begin{equation}
\label{eq:relations}
x'(t)=\frac{\sin(\alpha(t))}{\cos(y(t))}, \quad y'(t)=\cos(\alpha(t)),
\end{equation}
where $\alpha(t)$ is the angle between the tangent vector to $\gamma(t)$ and the vertical direction.

Then, from \eqref{k1}, \eqref{gamma} we get that
\begin{align}
\label{kt}
&k_\theta=-\sin(\alpha(t))\,\cot(y(t)),
\\
&k_t=\sin(\alpha(t))\,\tan(y(t))-\alpha'(t). \notag
\end{align}
We recall that
\begin{equation}
\label{H}
2H=k_t+k_\theta=-\sin(\alpha(t))\,\cot(y(t))+\sin(\alpha(t))\,\tan(y(t))-\alpha'(t).
\end{equation}

On the other hand, it is easy to check that
\begin{equation}
\label{energia}
E=\sin(y(t))\,\bigg(\cos(y(t)\,\sin\alpha(t)+H\,\sin(y(t))\bigg)
\end{equation}
is constant (just compute the derivative with respect to $t$, using the above relations~\eqref{eq:relations}).

Finally, taking into account previous expressions
\eqref{eq:beta1},~\eqref{kt},~\eqref{H} and~\eqref{energia}, 
it follows that
\begin{equation}
\label{eq:beta2}
\beta=f(t)^2\,(k_t - k_{\theta})=\sin^2(y(t))\,\bigg(2\,\sin\alpha(t)\,\cot(y(t))+2\,H\bigg)=2\,E.
\end{equation}

\begin{remark}
In fact, the above constant $E$ is a first integral associated to the system of equations \eqref{eq:relations}
of the generating curve $\gamma$ (see \cite[Th.~1]{hsiang}).
\end{remark}

\begin{remark}
In the previous computations of $k_t$, $k_\theta$, we have considered the normal vector $N(t,\theta)$
defined by \eqref{eq:normal}. 
If we consider that normal vector with opposite sign, the values of $k_t$, $k_\theta$ will change the
sign too. 
The appropriate choice of $N(t,\theta)$ will be determined by the positivity of $\beta$, in view of \eqref{eq:beta1}.
\end{remark}

\begin{remark}
It follows from \eqref{eq:igualdad}, \eqref{gamma} that
\begin{equation}
\label{eq:f}
f(t)=\ga_3(t)=\sin(y(t)),\ t\in[0,t_0].
\end{equation}
\end{remark}

\subsection{On the function $w$}

The function $w$ arising in~\eqref{eq:conforme} establishes the relation between the metric $ds^2$ in $M$
and the flat metric $ds_0^2$ associated to the Hopf differential of the immersion.
It only depends on the variable $t$ since $M$ is rotationally symmetric,
and it satisfies the sinh-Gordon equation for the laplacian associated to $ds_0^2$ (\cite[eq.~(2)]{ritros});
equivalently, by using~\eqref{eq:beta} we have
\begin{equation}
\label{eq:sinhG}
\left\{\begin{tabular}{l}
    $w''+\beta\sinh(w)\cosh(w)=0$,
    \\[2mm]
    $w(0)=c, w'(0)=0,$
    \end{tabular}\right.
\end{equation}
where $c\in\rr$, and the derivatives are taken with respect to the standard flat metric $(dt')^2+d\theta^2$.





\begin{remark}
\label{re:c}
The above value $c\in\rr$ in \eqref{eq:sinhG} is related with the length of the parallel $\sph^1\times\{0\}\subset M$,
since that length is equal to $2\pi\,f(0)=2\pi\,\exp(c)\,b^{-1}\,\beta^{1/2}$.
On the other hand, by using \cite[eq.~(1)]{hsiang} and \eqref{eq:f},
we have that $L(\sph^3\times\{0\})=2\pi\,\cos(y(0))=2\pi\sqrt{1-f(0)^2}$.
Thus, the value $c$ equals
\[
\log\Bigg(\frac{b\ \sqrt{1-\ga_3^2(0)}}{\beta^{1/2}}\Bigg)=
\log\Bigg(\frac{b\ \sqrt{1-f(0)^2}}{\beta^{1/2}}\Bigg).
\]
\end{remark}


In this setting, we recall that the Gauss curvature $K$ of $M$, with respect to the metric $ds^2$,
only depends on the $t$-coordinate and is given by
\begin{equation}
\label{eq:K}
K=(b^2/4)(1-\exp(-4w)).
\end{equation}

\begin{remark}
By integrating equality \eqref{eq:sinhG}, multiplied by $2\,w'$, 
we obtain
\begin{equation}
\label{eq:first}
(w')^2+\beta\cosh^2(w)=\beta\cosh^2(c),
\end{equation}
which is a first integral of equation \eqref{eq:sinhG},
where the derivative is with respect to the flat metric $(dt')^2+d\theta^2$.
\end{remark}

\subsection{Index form and Jacobi operator}
Recall that $N$ denotes the normal vector field of $M$, and $K$ is the Gauss curvature.
Then, it is well known that the second variation formula of the area,
for variations preserving the volume enclosed by $M$, is given in general by \cite[Prop. 2.5]{bcs}
\begin{align}
\label{eq:indexform}
I(f,f)&=-\int_M \big(f\Delta f + (\overline{R}+|\sg|^2)f^2\big)\,da
\\
&=-\int_M \big(f\Delta f +(4+4 H^2- 2K)f^2\big)\,da, \notag
\end{align}
where $\Delta$ is the laplacian operator associated to the metric $ds^2$,
$f$ is the normal component of the vector field associated to the variation,
$\overline{R}=2\,Ric(N)$ is the Ricci curvature of the ambient space $\sph^3$,
and $|\sg|^2$ is the square of the norm of the second fundamental form $\sg$.
We shall refer to the quadratic form defined by \eqref{eq:indexform} as the
\emph{index form} of $M$.
The associated \emph{Jacobi operator} is thus given by
\[
L f=\Delta f + (4+4 H^2- 2K) f,
\]
for any $C^{\infty}(M)$ function $f$, and so
\[
I(f,f)=-\int_M f\,Lf\,da.
\]

\noindent Taking into account expression 
\eqref{eq:K} we have that
\begin{align}
\label{cambio}
4+4H^2-2K&=b^2-2K= (b^2/2)\,(1+\exp(-4w)) \notag
\\
&=b^2\,\exp(-2w)\,\cosh(2w) \notag
\\
&=b^2\,\exp(-2w)\,(\cosh^2(w)+\sinh^2(w)).
\end{align}
In view of expression \eqref{cambio}, we now define a new operator $L_0$ by
$$L_0 f=\exp(2w)b^{-2} Lf,\quad f\in C^{\infty}(M).$$
Then, it is clear, in view of \eqref{eq:conforme} and \eqref{eq:beta}, that
\begin{align}
\label{eq:L0}
L_0 f=&\Delta_0f+(\cosh^2(w)+\sinh^2(w))f
\\
=&\frac{1}{\beta}\bigg(\frac{\ptl^2f}{\ptl t^2}+\frac{\ptl^2f}{\ptl\theta^2}\bigg) + (\cosh^2(w)+\sinh^2(w))f, \notag
\end{align}
where $\Delta_0$ represents the laplacian of $ds_0^2$.
An important fact that we will use later is that both operators $L$, $L_0$
only differ in a positive scalar factor.


\subsection{Morse index of CMC surfaces}
The \emph{Morse index} of a closed constant mean curvature (CMC) surface $\mathcal{S}$
is defined by means of the Jacobi operator and,
as indicated in \cite{rs}, provides a degree of the instability of $\mathcal{S}$ with respect to the area.
We first recall the following definition.

\begin{definition}
Given a function $f:M\to\rr$, we shall say that $f$ is an eigenfunction of the Jacobi operator $L$,
with associated eigenvalue $\lambda\in\rr$, if 
\[
Lf+\lambda f=0.
\]
\end{definition}

\noindent It is known that the set of eigenvalues $\{\lambda_i\}_{i\in\nn}$ of the Jacobi operator $L$
consists of an increasing sequence, diverging to $+\infty$,
and that the first eigenvalue $\lambda_1$ has multiplicity one (see \cite{chavel} for further details).
We can now define the Morse index of a CMC surface.
\begin{definition}
Given a closed CMC surface $\mathcal{S}$, the Morse index Ind($\mathcal{S}$) is the number of negative
eigenvalues of the Jacobi operator $L$, each one counted with its multiplicity.
\end{definition}

Our purpose is giving a lower bound for the Morse index for CMC tori of re\-volution immersed in $\sph^3$.
To do that, we will focus on the operator $L_0$, since due to its definition,
it will have the same number of negative eigenvalues that the Jacobi operator $L$.

\section{Explicit computation of some eigenvalues of $L_0$}
In this Section we shall compute directly some (negative) eigenvalues of the ope\-rator $L_0$,
taking into account its expression \eqref{eq:L0}, by using certain functions on $M$ with independent variables.
We first define the operator $L_t$, on the set of functions of real variable, as
\[
L_tf=\frac{1}{\beta}\,f''(t)+(\cosh^2(w)+\sinh^2(w))f(t), \quad f:\rr\to\rr.
\]
It is clear that, for functions defined on $M$, we have that
$L_0=\displaystyle{\frac{1}{\beta}\ \frac{\partial^2}{\partial\theta^2} + L_t}$.
We begin with the following key result.

\begin{lemma}
\label{le:lambdamu}
Let $u=u(t)$ be an eigenfunction of $L_t$, with associated eigenvalue $\lambda\in\rr$, and
let $v=v(\theta)$ be an eigenfunction of the laplacian $\frac{\ptl^2}{\ptl\theta^2}$, with associated eigenvalue $\mu\in\rr$.
Then, the function $f:M\to\rr$ given by $f(t,\theta)=u(t)v(\theta)$ is an eigenfunction of $L_0$,
with associated eigenvalue $\lambda+\mu/\beta$.
\end{lemma}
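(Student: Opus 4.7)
The plan is to run a standard separation of variables argument based on the already-established decomposition $L_0=\frac{1}{\beta}\frac{\partial^2}{\partial\theta^2}+L_t$ coming from~\eqref{eq:L0}. Since $u$ depends only on $t$ and $v$ only on $\theta$, the mixed partials of the product $f(t,\theta)=u(t)\,v(\theta)$ collapse, so the two pieces of $L_0$ act cleanly on one factor each.

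First I would plug $f=uv$ into the expression of $L_0$ and use that $\partial_t^2(uv)=u''(t)\,v(\theta)$ and $\partial_\theta^2(uv)=u(t)\,v''(\theta)$. This lets me factor:
\[
L_0(uv)=v(\theta)\left(\tfrac{1}{\beta}u''(t)+(\cosh^2(w)+\sinh^2(w))\,u(t)\right)+\tfrac{1}{\beta}\,u(t)\,v''(\theta)
=v(\theta)\,L_t u(t)+\tfrac{1}{\beta}\,u(t)\,v''(\theta).
\]
Then I would substitute the two eigenvalue equations in the sign convention adopted in the paper, namely $L_t u+\lambda u=0$ and $v''+\mu v=0$, to replace $L_t u(t)$ by $-\lambda u(t)$ and $v''(\theta)$ by $-\mu v(\theta)$.

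The calculation then collapses to $L_0(uv)=-\lambda\,u(t)v(\theta)-\frac{\mu}{\beta}\,u(t)v(\theta)=-\bigl(\lambda+\tfrac{\mu}{\beta}\bigr)f$, i.e. $L_0 f+\bigl(\lambda+\tfrac{\mu}{\beta}\bigr)f=0$, which by the paper's definition is exactly the statement that $f$ is an eigenfunction of $L_0$ with eigenvalue $\lambda+\mu/\beta$.

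There is essentially no obstacle here; the only subtlety is cosmetic and consists in keeping track of the sign convention ($Lf+\lambda f=0$ rather than $Lf=\lambda f$) so that the two eigenvalues add with the correct signs in the final formula. The smoothness of $f$ on the torus is automatic from that of $u$ and $v$, and the identification of $L_0=\frac{1}{\beta}\partial_\theta^2+L_t$ has already been recorded just before the statement, so no further regularity or identification step is needed.
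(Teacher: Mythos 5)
Your proposal is correct and follows exactly the same separation-of-variables computation as the paper: apply the decomposition $L_0=\frac{1}{\beta}\partial_\theta^2+L_t$ to $f=uv$, substitute the two eigenvalue equations with the sign convention $Lf+\lambda f=0$, and read off the eigenvalue $\lambda+\mu/\beta$. No differences worth noting.
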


\begin{proof}
Applying the operator $L_0$ to $f$, we have that
\[
L_0 f=\frac{1}{\beta}\ \frac{\ptl^2 v}{\ptl\theta^2}\,u+v\,L_t u=-(\lambda+\frac{\mu}{\beta})u\,v=-(\lambda+\frac{\mu}{\beta})f,
\]
and so the result follows.
\end{proof}

We now proceed to find convenient functions for applying Lemma~\ref{le:lambdamu}.
It is clear that $v(\theta)$ can be taken equal to a constant, or equal to $\cos(\theta)$, $\sin(\theta)$,
which are eigenfunctions of the laplacian (with $\mu=0$ and $\mu=1$ as eigenvalues, respectively).
The next result gives some eigenfunctions for the operator $L_t$.

\begin{proposition}
\label{prop:hiperbolicos}
The functions $u_1,u_2:[0,t_0]\to\rr$ defined by
\[
u_1(t)=\cosh(w(t)),\quad u_2(t)=\sinh(w(t)),
\]
are independent eigenfunctions of $L_t$, with associated eigenvalues
$$\lambda_1=-\cosh^2(c),\quad \lambda_2=1-\cosh^2(c),$$ respectively.
\end{proposition}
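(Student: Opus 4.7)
The plan is a direct verification: apply $L_t$ to each candidate eigenfunction, using the chain rule, and then reduce with the two identities the function $w$ satisfies, namely the sinh-Gordon equation \eqref{eq:sinhG} and its first integral \eqref{eq:first}. Once $w''$ and $(w')^2$ are eliminated, the remaining expression should collapse, up to a constant, onto the original function.

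Concretely, for $u_1(t)=\cosh(w(t))$, I would first compute
\[
u_1'=\sinh(w)\,w',\qquad u_1''=\cosh(w)(w')^2+\sinh(w)\,w''.
\]
Then I replace $w''$ by $-\beta\sinh(w)\cosh(w)$ from \eqref{eq:sinhG}, and $(w')^2$ by $\beta\cosh^2(c)-\beta\cosh^2(w)$ from \eqref{eq:first}. After dividing by $\beta$ this gives
\[
\tfrac{1}{\beta}u_1''=\cosh(w)\cosh^2(c)-\cosh(w)\bigl(\cosh^2(w)+\sinh^2(w)\bigr),
\]
and the second summand cancels exactly the potential term $(\cosh^2(w)+\sinh^2(w))\,u_1$ in $L_tu_1$, leaving $L_tu_1=\cosh^2(c)\,u_1$, i.e.\ $L_tu_1+\lambda_1u_1=0$ with $\lambda_1=-\cosh^2(c)$.

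For $u_2(t)=\sinh(w(t))$ the computation is parallel: $u_2'=\cosh(w)w'$ and $u_2''=\sinh(w)(w')^2+\cosh(w)w''$. The same two substitutions yield
\[
\tfrac{1}{\beta}u_2''=\sinh(w)\cosh^2(c)-2\sinh(w)\cosh^2(w).
\]
The only subtlety compared with the previous case is the rewriting $2\cosh^2(w)=1+\cosh^2(w)+\sinh^2(w)$, which produces an extra $-\sinh(w)$ term. After adding $(\cosh^2(w)+\sinh^2(w))\,u_2$ the $\cosh^2(w)+\sinh^2(w)$ contributions cancel and one is left with $L_tu_2=(\cosh^2(c)-1)\sinh(w)$, giving $\lambda_2=1-\cosh^2(c)$.

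Finally, independence of $u_1$ and $u_2$ is automatic: the eigenvalues $\lambda_1$ and $\lambda_2$ differ by $1$, so the two eigenfunctions belong to distinct eigenspaces of the linear operator $L_t$ and hence must be linearly independent. I don't foresee a real obstacle here; the only place where care is needed is the bookkeeping in the $u_2$ case, specifically the identity $2\cosh^2(w)=1+\cosh^2(w)+\sinh^2(w)$, which is what accounts for the shift of the eigenvalue by $1$.
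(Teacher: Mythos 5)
Your computation is exactly the "straightforward" verification the paper's one-line proof alludes to (direct application of $L_t$ followed by elimination of $w''$ and $(w')^2$ via \eqref{eq:sinhG} and \eqref{eq:first}), and both eigenvalue calculations check out, including the identity $2\cosh^2(w)=1+\cosh^2(w)+\sinh^2(w)$ that produces the shift by $1$. The proposal is correct and takes essentially the same approach as the paper.
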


\begin{proof}
The proof is straightforward, taking into account \eqref{eq:sinhG} and \eqref{eq:first}.
\end{proof}

\begin{remark}
We point out that when $w$ is identically zero (which corresponds to flat metric $ds^2$),
previous Proposition~\ref{prop:hiperbolicos} only shows that
constant functions will be eigenfunctions of $L_t$, with eigenvalue $\lambda=-1$.
\end{remark}

Our idea consists of using functions with independent variables.
By combining the functions $u_1(t)$, $u_2(t)$ from Proposition~\ref{prop:hiperbolicos}
with a constant function, with $\sin(\theta)$, or with $\cos(\theta)$,
we can apply Lemma~\ref{le:lambdamu} to obtain some eigenvalues for the operator $L_0$.

\begin{theorem}
\label{th:six}
Let $M$ be a CMC torus of revolution immersed in $\sph^3$ and $L_0$ the operator defined previously.
Then,
\begin{itemize}
\item[i)] the first eigenfunction of $L_0$ is $f_1(t,\theta)=\cosh(w(t))$, with associated eigenvalue $\lambda_1=-\cosh^2(c)<0$,
\item[ii)] $f_2(t,\theta)=\sinh(w(t))$ is an eigenfunction of $L_0$, with associated eigenvalue $1-\cosh^2(c)<0$,
\item[iii)] $f_3(t,\theta)=\cosh(w(t))\sin(\theta)$, $\overline{f_3}(t,\theta)=\cosh(w(t))\cos(\theta)$ are two eigenfunctions
of $L_0$, with associated eigenvalue $-\cosh^2(c)+1/\beta$,
\item[iv)] $f_4(t,\theta)=\sinh(w(t))\sin(\theta)$, $\overline{f_4}(t,\theta)=\sinh(w(t))\cos(\theta)$ are two eigenfunctions
of $L_0$, with associated eigenvalue $1-\cosh^2(c)+1/\beta$.
\end{itemize}
Furthermore, these six eigenfunctions are independent, and Ind($M$)$\geq 2$.
\end{theorem}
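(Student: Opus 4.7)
The strategy is essentially a bookkeeping exercise: combine Proposition~\ref{prop:hiperbolicos}, which provides the two $t$-eigenfunctions $u_1=\cosh w$ and $u_2=\sinh w$ of $L_t$ with eigenvalues $-\cosh^2(c)$ and $1-\cosh^2(c)$, with the three standard $\theta$-eigenfunctions $1,\sin\theta,\cos\theta$ of $\partial^2/\partial\theta^2$ (eigenvalues $\mu=0$ and $\mu=1$), and then invoke Lemma~\ref{le:lambdamu}. This immediately yields the six functions listed in (i)--(iv) as eigenfunctions of $L_0$, with eigenvalues $\lambda+\mu/\beta$ as stated. No new computation is required beyond the Lemma.

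The sign assertions are elementary. Since $\cosh^2(c)\geq 1$, we have $\lambda_1=-\cosh^2(c)\leq -1<0$; and $1-\cosh^2(c)=-\sinh^2(c)<0$ whenever $c\neq 0$, which is the non-flat regime implicitly considered (when $c=0$ the function $\sinh w$ vanishes identically and (ii), (iv) are vacuous). To identify $f_1=\cosh w$ as the \emph{first} eigenfunction of $L_0$, I would appeal to the standard spectral fact recalled earlier in the paper: on a closed surface the first eigenvalue of a Schrödinger-type operator is simple, and its eigenfunctions do not change sign. Since $\cosh w>0$ everywhere on $M$, the only eigenvalue it can correspond to is the smallest one, so $\lambda_1=-\cosh^2(c)$ is indeed the bottom of the spectrum.

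Linear independence of the six functions is a separation-of-variables argument: $\{1,\cos\theta,\sin\theta\}$ are independent as functions of $\theta$, and $\{\cosh w,\sinh w\}$ are independent as functions of $t$ provided $w\not\equiv 0$ (which holds in the non-flat case by definition of $c\neq 0$ and equation \eqref{eq:sinhG}). Hence the six products $u_i(t)v_j(\theta)$ are independent in $\mathcal{C}^\infty(M,\rr)$. For the Morse index bound, note that the two eigenvalues from (i) and (ii), namely $-\cosh^2(c)$ and $-\sinh^2(c)$, are distinct and strictly negative, and the corresponding eigenfunctions $f_1,f_2$ are independent; thus $L_0$ has at least two negative eigenvalues (counted with multiplicity). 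Since $L_0=\exp(2w)b^{-2}L$ differs from $L$ by a strictly positive conformal factor, they share the same sign pattern of eigenvalues, so $\mathrm{Ind}(M)\geq 2$.

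The only subtle point I anticipate is the identification of $f_1$ as the \emph{first} eigenfunction rather than merely some eigenfunction: the direct eigenvalue computation via Lemma~\ref{le:lambdamu} does not by itself distinguish the bottom of the spectrum, and the positivity of $\cosh w$ together with the simplicity/positivity characterization of the ground state of $L_0$ on the closed torus is what closes this gap. Everything else is routine verification once Proposition~\ref{prop:hiperbolicos} and Lemma~\ref{le:lambdamu} are in hand.
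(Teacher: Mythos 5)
Your proposal is correct and follows essentially the same route as the paper: apply Lemma~\ref{le:lambdamu} to the products of $u_1,u_2$ from Proposition~\ref{prop:hiperbolicos} with $1,\sin\theta,\cos\theta$, identify $f_1=\cosh w$ as the first eigenfunction because it never vanishes, and use $c\neq 0$ (the non-flat case, since $c=0$ forces $w\equiv 0$) to get the strict negativity of $1-\cosh^2(c)$. Your explicit treatment of the linear independence and of the ground-state characterization merely spells out what the paper leaves implicit.
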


\begin{proof}
Just apply Lemma~\ref{le:lambdamu} with the corresponding functions in order to
obtain the six independent eigenfunctions.
Moreover, $f_1$ is the first eigenfunction of $L_0$ since it does not vanish.
Finally, $\lambda_1=-\cosh^2(c)$ is always negative, and since $c\neq0$
(otherwise, \eqref{eq:sinhG} yields $w=0$ and so $f$ is constant), we have that $1-\cosh^2(c)$
is also a negative eigenvalue for $L_0$.
\end{proof}

Another negative eigenvalue of $L_0$ can be obtained by following some ideas from \cite{rs}.
For a given geodesic curve $\ell'$ in $\sph^3$, orthogonal to $\ell$,
we can consider the Killing vector field $K$ associated to the rotations about $\ell'$ in $\sph^3$.
Then, the normal component $f=\escpr{K,N}$ of $K$ satisfies $L_0(f)=0$ (that is, $f$ is a Jacobi function for $L_0$),
and can be expressed as $f(t,\theta)=u(t)\cos\theta$ (see \cite[Lemma~4.1]{rs}).
In this situation, it is easy to check that $u$ is an eigenfunction of $L_0$, with associated eigenvalue $-1/\beta$
(which is negative since $\beta>0$).
Therefore, as in \cite[Lemma~4.2]{rs}, by taking the two geodesic curves orthogonal to $\ell$,
we get two independent eigenfunctions of $L_0$ (depending only on variable $t$) with eigenvalue $-1/\beta$.
Straightforward computations show that these two eigenfunctions are given by
\begin{align}
\label{eq:uu}
u(t,\theta)&=-\ga_3(t)\,b(t)+\ga_2(t)\,c(t)=-\ga_1'(t),
\\
\overline{u}(t,\theta)&=-\ga_3(t)\,a(t)+\ga_1(t)\,c(t)=\ga_2'(t), \notag
\end{align}
where $a(t)$, $b(t)$, $c(t)$ are the real functions provided by~\eqref{eq:normal}.
Lemma~\ref{le:uu} summarizes these properties.


\begin{lemma}$($\cite[Lemmata~4.1 and 4.2]{rs}$)$
\label{le:uu}
The functions $u,\,\overline{u}$ defined by \eqref{eq:uu} are two independent eigenfunctions of $L_0$ with $-1/\beta$ as
associated (negative) eigenvalue.
\end{lemma}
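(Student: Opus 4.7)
The plan is to realize $u$ and $\bar u$ as normal components of two ambient Killing fields of $\sph^3$, and to extract the eigenvalue $-1/\beta$ by separation of variables through Lemma \ref{le:lambdamu}. First I would invoke the standard principle that if $X$ is any Killing vector field on $\sph^3$, its flow consists of ambient isometries and hence preserves both mean curvature and enclosed volume. Consequently the normal projection $\escpr{X,N}$ on $M$ lies in the kernel of the Jacobi operator $L$. Since $L_0 = \exp(2w)\,b^{-2}\,L$ differs from $L$ only by a positive factor, one also has $L_0\,\escpr{X,N} = 0$.

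Next I would specialize to the two Killing fields $X_1, X_2$ generating rotations in the coordinate $2$-planes $(x_2,x_3)$ and $(x_1,x_3)$; these are the infinitesimal rotations about the two great circles of $\sph^3$ lying respectively in the $(x_1,x_4)$- and $(x_2,x_4)$-planes, both meeting the axis of revolution $\ell$ orthogonally. A direct coordinate expansion using \eqref{eq:normal} and $\psi(t,\theta) = (\ga_1(t),\ga_2(t),\ga_3(t)\cos\theta,\ga_3(t)\sin\theta)$ yields
\[
\escpr{X_1,N}(t,\theta) = \big(-\ga_3(t)\,b(t) + \ga_2(t)\,c(t)\big)\,\cos\theta, \qquad \escpr{X_2,N}(t,\theta) = \big(-\ga_3(t)\,a(t) + \ga_1(t)\,c(t)\big)\,\cos\theta.
\]
The simplified identifications in \eqref{eq:uu}, namely $u = -\ga_1'$ and $\bar u = \ga_2'$, then follow by substituting the explicit expressions for $a, b, c$ and using $\ga_1^2 + \ga_2^2 + \ga_3^2 \equiv 1$ together with its differentiated form.

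With these separated expressions in hand, I would apply Lemma \ref{le:lambdamu} with $v(\theta) = \cos\theta$; in the sign convention of that lemma, this $v$ has $\mu = 1$. The identity $L_0(u(t)\cos\theta) = 0$, read through the lemma's formula $\lambda + \mu/\beta = 0$, forces $\lambda = -1/\beta$, so $u$ is an eigenfunction of $L_t$ with eigenvalue $-1/\beta$. Because $u$ depends only on $t$ we have $L_0 u = L_t u$, so $u$ is equally an eigenfunction of $L_0$ with the same eigenvalue. The identical argument applied to $X_2$ yields the analogous statement for $\bar u$. Independence of $u = -\ga_1'$ and $\bar u = \ga_2'$ is then immediate: a proportionality $\ga_1' \equiv k\,\ga_2'$ would integrate to a linear relation between $\ga_1$ and $\ga_2$, forcing $\ga$ to lie in a geodesic $2$-plane of $\sph^3$ and contradicting that $M$ is a torus.

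The only genuinely computational step is the separation-of-variables identity $\escpr{X_i,N} = (\cdot)\cos\theta$ read off from \eqref{eq:normal}; after this, the conclusion is an immediate application of Lemma \ref{le:lambdamu} and the bookkeeping above.
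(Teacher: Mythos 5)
Your argument is correct and follows essentially the same route as the paper (which in turn defers the details to Rossman--Sultana): realize $u,\overline{u}$ as normal components of Killing fields generating rotations about geodesics orthogonal to $\ell$, observe that these Jacobi functions of $L_0$ separate as $u(t)\cos\theta$, and peel off the $\cos\theta$ factor via the separation identity to extract the eigenvalue $-1/\beta$ for the purely $t$-dependent factor. The only cosmetic point is the independence step: a relation $\ga_1'\equiv k\,\ga_2'$ confines $\ga$ to a planar circle (geodesic only if the integration constant vanishes), and such a circle necessarily meets the axis $\ell$, so the rotated surface would be a sphere rather than a torus --- the contradiction stands.
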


From above lemma we have the following interesting consequence related with Theorem~\ref{th:six}.

\begin{lemma}
\label{le:otronegativo}
The eigenvalue $-\cosh^2(c)+1/\beta$ is negative.
\end{lemma}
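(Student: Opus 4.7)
The plan is to reduce the desired inequality to a strict eigenvalue comparison and then settle it by matching multiplicities of two eigenvalues already identified in the paper. Specifically, the claim $-\cosh^2(c)+1/\beta<0$ is equivalent to $-1/\beta>-\cosh^2(c)$, so it suffices to show that $-\cosh^2(c)$ and $-1/\beta$ are \emph{distinct} eigenvalues of $L_0$ and to invoke the extremal character of the first one.

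First, I would recall from Theorem~\ref{th:six}(i) that $-\cosh^2(c)$ is the first eigenvalue of $L_0$, with eigenfunction $f_1=\cosh(w(t))>0$. Since $L_0=\Delta_0+(\cosh^2 w+\sinh^2 w)$ is a self-adjoint Schr\"odinger-type operator on the compact flat torus $(M,ds_0^2)$, the standard fact recalled in the paper (immediately before the definition of Morse index) applies and the first eigenvalue is simple; hence $-\cosh^2(c)$ has multiplicity exactly one. Next, Lemma~\ref{le:uu} supplies two linearly independent eigenfunctions $u,\overline{u}$ for the eigenvalue $-1/\beta$, so $-1/\beta$ has multiplicity at least two. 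The two eigenvalues therefore cannot coincide, and since $-\cosh^2(c)$ is the infimum of the spectrum, any other eigenvalue must strictly exceed it; in particular $-1/\beta>-\cosh^2(c)$, which rearranges to the claimed $-\cosh^2(c)+1/\beta<0$.

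The argument is essentially a one-line multiplicity comparison, so I do not anticipate any serious obstacle. The only point worth double-checking is that the functions $u,\overline{u}$ produced in Lemma~\ref{le:uu} really are linearly independent, so that the eigenvalue $-1/\beta$ has geometric multiplicity at least two; but this independence is exactly the content of that lemma, so nothing further is needed.
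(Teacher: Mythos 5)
Your proposal is correct and follows essentially the same route as the paper: both arguments rest on $-\cosh^2(c)$ being the first eigenvalue of $L_0$ and $-1/\beta$ being a distinct eigenvalue, forcing $-1/\beta>-\cosh^2(c)$. You are in fact slightly more careful than the paper's two-line proof, since you explicitly justify the distinctness via the simplicity of the first eigenvalue versus the multiplicity $\geq 2$ of $-1/\beta$ — a point the paper only makes in the paragraph following the lemma.
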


\begin{proof}
From Theorem~\ref{th:six} we have that $\lambda_1=-\cosh^2(c)$ is the first eigenvalue of $L_0$.
Then, since $-1/\beta$ is another eigenvalue of $L_0$, we necessarily have $-\cosh^2(c)<-1/\beta$,
and so $-\cosh^2(c)+1/\beta<0$.
\end{proof}

The Morse index is defined taking into account the multiplicities of the negative eigenvalues. 
In this sense, we have to study carefully whether $u$ or $\overline{u}$
belong to the eigenfunctions space associated to one of the eigenvalues described in Theorem~\ref{th:six}
(if this is the case, they will not contribute to the Morse index).
For having that, a necessary condition is that $-1/\beta$ coincides with one of the eigenvalues previously obtained.
It is clear that it cannot be equal to $\lambda_1=-\cosh^2(c)$, otherwise the first eigenvalue of $L_0$
would have multiplicity greater than one, which is a contradiction (recall that $u$, $\overline{u}$ are independent).
If $-1/\beta$ coincides with $\lambda=-\cosh^2(c)+1/\beta$, as $u$, $\overline{u}$ only depend on variable $t$,
they will be independent from $f_3$, $\overline{f_3}$, and so the multiplicity of $\lambda$ will be greater than
(or equal to) four (hence contributing to the Morse index, since $\lambda<0$ from Lemma~\ref{le:otronegativo}).
The same reasoning is valid if $-1/\beta$ coincides with $\lambda'=1-\cosh^2(c)+1/\beta$
(observe that the sign of $\lambda'$ has not been discussed yet).
Finally, we have to study if $-1/\beta$ coincides with $1-\cosh^2(c)$, equivalently $1-\cosh^2(c)+1/\beta=0$.
This case of equality is treated in Subsection~\ref{subsec:degenerate}. 
However, if this equality does not occur, we immediately have the following result,
which establishes lower bounds for the Morse index.

\begin{theorem}
\label{th:distintos}
Let $M$ be a CMC torus of revolution immersed in $\sph^3$.
With the previous notation, assume that $1-\cosh^2(c)+1/\beta\neq0$.
Then,
\begin{itemize}
\item[i)] Ind($M$)$\geq8$, if \ $1-\cosh^2(c)+1/\beta<0$.
\item[ii)] Ind($M$)$\geq6$, if \ $1-\cosh^2(c)+1/\beta>0$.
\end{itemize}
\end{theorem}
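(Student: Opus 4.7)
Plan: The proof combines the eigenfunctions of $L_0$ constructed in Theorem~\ref{th:six} and Lemma~\ref{le:uu}. Label the candidate eigenvalues $\lambda_1 = -\cosh^2(c)$, $\lambda_2 = 1 - \cosh^2(c)$, $\lambda_3 = -\cosh^2(c) + 1/\beta$, $\lambda_4 = 1 - \cosh^2(c) + 1/\beta$, $\lambda_5 = -1/\beta$, associated to the eigenfunctions $f_1$; $f_2$; $f_3, \overline{f_3}$; $f_4, \overline{f_4}$; $u, \overline{u}$ respectively. By Theorem~\ref{th:six} and Lemma~\ref{le:otronegativo}, the four values $\lambda_1, \lambda_2, \lambda_3, \lambda_5$ are always negative. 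In case (i), the hypothesis $\lambda_4 < 0$ provides one additional negative eigenvalue, giving eight candidate eigenfunctions; in case (ii), $\lambda_4 > 0$ forces us to discard $f_4, \overline{f_4}$, leaving six. The plan is to show that these $8$ (resp.~$6$) eigenfunctions are linearly independent in $C^\infty(M)$. Since $L_0$ and $L$ share the same negative spectrum and any linearly independent family of eigenfunctions attached to negative eigenvalues spans a subspace of $\bigoplus_{\lambda<0} V_\lambda$, this would immediately force Ind($M$)$\geq 8$ (resp.~$\geq 6$).

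To prove linear independence I would decompose a vanishing linear combination into its Fourier modes in $\theta$. The $\sin\theta$ and $\cos\theta$ modes reduce to identities of the form $a\cosh(w(t))+b\sinh(w(t))=0$, forcing $a=b=0$ since $\cosh w$ and $\sinh w$ are linearly independent as functions of $t$ (otherwise $\tanh w$ would be constant, contradicting the data $w(0)=c\neq 0$, $w'(0)=0$ in \eqref{eq:sinhG}). The constant-in-$\theta$ mode is a relation $\alpha_1 f_1 + \alpha_2 f_2 + \alpha_7 u + \alpha_8 \overline{u} = 0$ among four eigenfunctions of the radial operator $L_t$ with eigenvalues $\lambda_1, \lambda_2, \lambda_5, \lambda_5$. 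Applying $L_t$ and subtracting $\lambda_5$ times the original relation collapses it to $(\lambda_1-\lambda_5)\alpha_1\cosh w+(\lambda_2-\lambda_5)\alpha_2\sinh w=0$. The theorem's hypothesis $1-\cosh^2(c)+1/\beta\neq 0$ is exactly $\lambda_2\neq\lambda_5$; and $\lambda_1\neq\lambda_5$ follows from the simplicity of the first eigenvalue, since otherwise $f_1, u, \overline{u}$ would all be first eigenfunctions, forcing $u, \overline{u}$ to be proportional to $f_1$ and hence linearly dependent, contradicting Lemma~\ref{le:uu}. Both coefficients being nonzero, linear independence of $\cosh w, \sinh w$ gives $\alpha_1=\alpha_2=0$, and then $\alpha_7=\alpha_8=0$ again by Lemma~\ref{le:uu}.

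The main obstacle is precisely the step involving the four $t$-only functions $f_1, f_2, u, \overline{u}$: no $\theta$-Fourier separation is available there, so one needs the spectral elimination via $L_t$. The hypothesis $\lambda_2\neq\lambda_5$ is used only at this point, together with the structural fact $\lambda_1\neq\lambda_5$ coming from simplicity of the first eigenvalue. Once linear independence is in hand, no further case analysis on which of $\lambda_2=\lambda_3$, $\lambda_3=\lambda_5$, or $\lambda_4=\lambda_5$ (the last possible only in case (i)) actually occurs is needed: any such coincidence merely merges the corresponding eigenfunctions into a single larger eigenspace of the appropriate total dimension, so the count is unaffected and the stated bounds follow.
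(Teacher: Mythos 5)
Your proposal is correct and reaches the theorem by the same basic route as the paper: the same eight candidate eigenfunctions $f_1,f_2,f_3,\overline{f_3},f_4,\overline{f_4},u,\overline{u}$, the same negativity facts (Lemma~\ref{le:otronegativo} for $\lambda_3$, the hypothesis for $\lambda_4$ in case (i)), and the same two non-coincidence inputs, namely $\lambda_2\neq\lambda_5$ (the stated hypothesis) and $\lambda_1\neq\lambda_5$ (simplicity of the first eigenvalue). Where you differ is in how the independence is organized: the paper argues by cases on whether $-1/\beta$ coincides with each of the other eigenvalues, invoking the $t$-only versus $\theta$-dependent distinction when it does, whereas you prove linear independence of the whole family at once by separating Fourier modes in $\theta$ and then, for the $\theta$-independent block $\{f_1,f_2,u,\overline{u}\}$, eliminating $u,\overline{u}$ by applying $L_t$ and subtracting $\lambda_5$ times the relation. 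Your packaging is arguably cleaner, since it makes explicit that any residual coincidences among $\lambda_2,\lambda_3,\lambda_4,\lambda_5$ are harmless once independence is in hand, a point the paper leaves implicit. One small imprecision: to rule out $a\cosh w+b\sinh w\equiv 0$ you say that $\tanh w$ constant contradicts the initial data of \eqref{eq:sinhG}; in fact a constant $w\equiv c$ is compatible with $w(0)=c$, $w'(0)=0$, and the contradiction comes from the equation itself, which forces $\beta\sinh(c)\cosh(c)=0$ and hence $c=0$, against $c\neq 0$. This is easily repaired and does not affect the argument.
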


\begin{proof}
From the hypothesis we have that the eigenfunctions shown in Theorem~\ref{th:six} and Lemma~\ref{le:uu} are
independent. An analysis of the eigenvalue $1-\cosh^2(c)+1/\beta$ yields the statement.
\end{proof}

\begin{remark}
Note that Theorem~\ref{th:distintos} establishes bounds on the Morse index of CMC tori of revolution in $\sph^3$
which improve the ones given in \cite[Th.~1.1]{rs}.
\end{remark}

Observe that above Theorem~\ref{th:distintos} yields a numerical criterion
(based only on the sign of $1-\cosh^2(c)+1/\beta$, which depends on the constant values of $c$ and $\beta$)
that provides lower bounds for the Morse index.
Now, we shall express the eigenvalue $\displaystyle{1-\cosh^2(c)+1/\beta}$ in terms of the mean curvature $H$
and the value $f(0)$, in order to study its sign.

%
%
%
From Remark~\ref{re:c} we have that
\[
\cosh(c)=\frac{e^c+e^{-c}}{2}=\frac{b^2\,(1-f(0)^2)+\beta}{2\,\beta^{1/2}\,b\,\sqrt{1-f(0)^2}},
\]
and so
\begin{equation}
\label{eq:c}
\cosh^2(c)=\frac{\bigg(4(1+H^2)(1-f(0)^2)+\beta\bigg)^2}{16\,\beta\,(1+H^2)(1-f(0)^2)}.
\end{equation}

On the other hand, the constant $\beta$ can be expressed only in terms of $H$ and $f(0)$:
in fact, its value can be obtained by taking $t=0$ in \eqref{energia}, and so
\[
\beta=2\,\sin(y(0))\bigg(\cos(y(0))\,\sin(\alpha(0))+H\,\sin(y(0))\bigg).
\]
Since $f(t)=\sin(y(t))$, and taking $t=0$ such that $\alpha(0)=\pi/2$,
it follows that
\begin{equation}
\label{eq:beta4}
\beta=2\,f(0)\bigg(\sqrt{1-f(0)^2}+H\,f(0)\bigg).
\end{equation}
Using equalities \eqref{eq:c} and \eqref{eq:beta4}, the eigenvalue $1-\cosh^2(c)+1/\beta$
can be expressed only in terms of $H$ and $f(0)$.
Moreover, since $\beta>0$, it follows that $\sqrt{1-f(0)^2}+H\,f(0)$ must be positive,
and so necessarily $$f(0)<\sqrt{\frac{1}{1+H^2}},$$
if $H<0$ (in the case $H\geq0$, then $f(0)\in(0,1)$).

Above equalities allow to calculate explicitly that eigenvalue,
for each $H\in\rr$ and each considered value of $f(0)$.
Numerical computations show that both possibilities from Theorem~\ref{th:distintos} may occur for
different tori.
For instance, when $H=1$ and $f(0)=0.3$, we have that $1-\cosh^2(c)+1/\beta$ is negative,
and then we can claim that the Morse index of the corresponding torus is greater than or equal to $8$.
On the other hand, when $H=0.5$ and $f(0)=0.5$, that eigenvalue is positive, and so the Morse
index will be greater than or equal to $6$. More examples of these two situations arise
for different values of $H$ and $f(0)$.

Moreover, we have checked numerically that the following statement holds.

\begin{corollary}
Let $M$ be a torus of revolution immersed in $\sph^3$ with constant mean curvature $H$.
If $H\geq3/2$ or $H\leq-1$, then Ind($M$)$\geq8$.
\end{corollary}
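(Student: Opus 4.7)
The plan is to invoke Theorem~\ref{th:distintos}(i), so the task reduces to showing that $1-\cosh^2(c)+1/\beta<0$ for every admissible $f(0)$ whenever $H\geq 3/2$ or $H\leq -1$. The first step is a clean algebraic reformulation. Setting $A:=4(1+H^2)(1-f(0)^2)$, the expression~\eqref{eq:c} rewrites as $\cosh^2(c)=(A+\beta)^2/(4A\beta)$, hence
\[
\sinh^2(c)=\frac{(A-\beta)^2}{4A\beta},
\]
and the desired negativity is equivalent to $\beta\sinh^2(c)>1$, that is, to the algebraic inequality
\[
(A-\beta)^2>4A. \qquad(\star)
\]

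I would then parametrize $f(0)=\sin\phi$ with $\phi\in(0,\phi_H)$, where $\phi_H$ is the largest value for which $\beta>0$. Under this substitution, $A=4(1+H^2)\cos^2\phi$ and, from~\eqref{eq:beta4}, $\beta=\sin(2\phi)+2H\sin^2\phi$, turning $(\star)$ into a one-variable inequality $F_H(\phi)>0$ for the explicit trigonometric polynomial $F_H(\phi):=(A-\beta)^2-4A$. Next I would carry out the boundary analysis. As $\phi\to 0^+$, $A\to 4(1+H^2)$ and $\beta\to 0$, so $F_H\to A^2-4A=16(1+H^2)H^2>0$. For $H\geq 0$ the other endpoint is $\phi_H=\pi/2$, at which $A=0$ and $\beta=2H$, giving $F_H\to 4H^2>0$. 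For $H<0$, the condition $\beta(\phi_H)=0$ forces $\tan\phi_H=-1/H$, hence $\cos^2\phi_H=H^2/(1+H^2)$ and $A=4H^2$, giving $F_H\to A(A-4)=16H^2(H^2-1)$, which is strictly positive precisely when $H<-1$. This endpoint calculation is already consistent with, and in fact explains, the threshold $H=-1$.

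The remaining step is to show $F_H>0$ on the open interval $(0,\phi_H)$ whenever $H\geq 3/2$ or $H\leq -1$. Since $F_H$ is continuous and positive at both endpoints, any interior failure produces at least two interior zeros, so the critical $H$ for which a root first enters the admissible strip is characterized by the simultaneous system $F_H=0$, $\partial_\phi F_H=0$. I expect the elimination of $\phi$ from this system to yield a polynomial identity in $H$ whose only real roots with $\phi\in(0,\phi_H)$ are $H=3/2$ on the positive side and $H=-1$ on the negative side, so that $F_H$ retains a definite sign on the admissible strip for $H$ outside the interval $(-1,3/2)$; by the endpoint analysis this sign is positive. The main obstacle will be controlling this elimination cleanly: the resulting polynomial in $\sin\phi,\cos\phi$ has fairly high degree, and one must verify carefully that no spurious admissible branch of the zero locus of $F_H$ intervenes for $H\geq 3/2$ or $H\leq -1$. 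Once the interior positivity is established, Theorem~\ref{th:distintos}(i) yields $\mathrm{Ind}(M)\geq 8$.
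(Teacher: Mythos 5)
Your reduction is correct and is already more analytic than what the paper actually does: the paper offers no proof of this corollary at all, stating only that it ``has been checked numerically.'' Your identity $\sinh^2(c)=(A-\beta)^2/(4A\beta)$ and the resulting equivalence of $1-\cosh^2(c)+1/\beta<0$ with $(A-\beta)^2>4A$ are right (using $A,\beta>0$), and your endpoint limits are correct: $16H^2(1+H^2)$ at $\phi=0$, $4H^2$ at $\phi=\pi/2$ for $H\geq 0$, and $16H^2(H^2-1)$ at $\tan\phi_H=-1/H$ for $H<0$, which indeed explains why $-1$ is the exact threshold on the negative side.

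There is, however, a genuine gap: the interior positivity of $F_H$ on $(0,\phi_H)$ --- the entire content of the corollary --- is not established. You defer it to an elimination of $\phi$ from $F_H=\partial_\phi F_H=0$ that you only ``expect'' to produce $H=3/2$ and $H=-1$ as the critical values, and this expectation is almost certainly wrong on the positive side: the paper's own numerics (Subsection on the degenerate case) locate the true critical value near $H\approx 1.4$, so $3/2$ is merely a convenient rational bound above it, not a root of any eliminant. Your argument therefore needs to show that the discriminant locus does not meet $[3/2,\infty)$, not that $3/2$ lies on it, and that verification is precisely the computation you have not done. Two further points need care: (i) the ``two interior zeros'' argument requires \emph{both} endpoint limits to be strictly positive, which fails at $H=-1$ where the right endpoint limit is $0$, so you must separately rule out $F_{-1}$ approaching $0$ from below near $\phi_{-1}$ (e.g.\ by a one-sided expansion at that endpoint); and (ii) you must anchor the continuity argument by exhibiting at least one $H$ in each regime for which positivity on the whole interval is actually verified. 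As written, the proposal is a sound plan whose decisive step remains numerical or conjectural --- which, to be fair, is also the status of the claim in the paper itself.
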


\begin{remark}
By using \eqref{eq:c} and \eqref{eq:beta4}, it is also possible to express the eigenvalue
$1-\cosh^2(c)+1/\beta$ in terms of the mean curvature $H$ and the constant $\beta$
(which is equi\-valent to the first integral or energy of the generating curve $\ga$, see \eqref{eq:beta2}).
\end{remark}

\subsection{A degenerate case}
\label{subsec:degenerate}

We will now focus on the equality case $-1/\beta=1-\cosh^2(c)$.
We have verified that this equality may occur in our surfaces (for instance, when $H=1$ and $f(0)=0.4658$),
but in very few situations. In fact, it only holds when $H$ lies approximately in the interval $(-1,1.4)$,
and just for a unique value of $f(0)$ in each case.
Consequently, it can be considered as a degenerate possibility.
In these situations, we just have the following result:


\begin{theorem}
\label{th:degenerate}
Let $M$ be a CMC torus of revolution immersed in $\sph^3$.
If $1-\cosh^2(c)=-1/\beta$, then Ind($M$)$\geq5$.
\end{theorem}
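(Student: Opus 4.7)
The plan is to count, carefully, the dimension of the space spanned by negative eigenfunctions of $L_0$ under the degenerate hypothesis $1-\cosh^2(c)=-1/\beta$. Substituting the hypothesis into the eigenvalues listed in Theorem~\ref{th:six}, the eigenvalue $1-\cosh^2(c)+1/\beta$ associated to $f_4,\overline{f_4}$ collapses to $0$, and so this pair no longer contributes to the Morse index. Conversely, the eigenvalue $-1/\beta$ carried by the Jacobi-type functions $u,\overline{u}$ of Lemma~\ref{le:uu} now coincides with the eigenvalue $1-\cosh^2(c)$ of $f_2$, which is in turn negative.

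The remaining candidates for negative eigenvalues are therefore
\[
\lambda_{1}=-\cosh^{2}(c),\qquad \lambda_{2}=1-\cosh^{2}(c)=-1/\beta,\qquad \lambda_{3}=-\cosh^{2}(c)+1/\beta,
\]
with eigenfunctions $f_1$; $f_2,u,\overline{u}$; and $f_3,\overline{f_3}$ respectively. Each of these three eigenvalues is strictly negative: $\lambda_1<0$ by inspection, $\lambda_2<0$ by hypothesis (or by Theorem~\ref{th:six}\,(ii)), and $\lambda_3<0$ by Lemma~\ref{le:otronegativo}.

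The key step is to verify that $\lambda_2$ has multiplicity at least two. This is immediate, and does \emph{not} require showing that $f_2$ is linearly independent from $u,\overline{u}$: Lemma~\ref{le:uu} already asserts that $u$ and $\overline{u}$ are independent eigenfunctions for $\lambda_2$, so $\dim\ker(L_0-\lambda_2\operatorname{Id})\ge 2$ without any further work.

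Finally, I split into cases according to whether $\lambda_3$ equals $\lambda_2$ or not. If $\lambda_1,\lambda_2,\lambda_3$ are pairwise distinct (noting $\lambda_1\ne\lambda_2,\lambda_3$ since $1/\beta\ne 0,1-\textup{something trivial}$), then the contributions to the negative part of the spectrum are $1+2+2=5$. If instead $\lambda_3=\lambda_2$, then in the common eigenspace one finds the four functions $u,\overline{u},f_3,\overline{f_3}$, which are independent because $f_3,\overline{f_3}$ have nontrivial $\theta$-dependence while $u,\overline{u}$ do not; together with $f_1$ this again yields $\textup{Ind}(M)\ge 1+4=5$. The only delicate point is precisely this independence check across $t$- and $\theta$-dependence, which is the part I would be most careful to spell out.
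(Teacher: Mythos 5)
Your proof is correct and follows essentially the same route as the paper: the eigenvalue $1-\cosh^2(c)+1/\beta$ vanishes, the multiplicity of $-1/\beta=1-\cosh^2(c)$ is at least two via $u,\overline{u}$ from Lemma~\ref{le:uu}, and the negativity of $-\cosh^2(c)+1/\beta$ comes from Lemma~\ref{le:otronegativo}, giving $1+2+2=5$. Your explicit check of the sub-case $\lambda_2=\lambda_3$ (where the $\theta$-dependence separates $f_3,\overline{f_3}$ from $u,\overline{u}$) is a small point the paper leaves implicit, but it does not change the argument.
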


\begin{proof}
Observe that, in this case, $u$, $\overline{u}$, $f_2$ are three eigenfunctions
depending only on $t$, with the same eigenvalue $-1/\beta$ for the second order differential equation given by $L_t$.
Then necessarily they cannot be independent, and so, taking into account Lemma~\ref{le:uu},
we can only assure that $-1/\beta$ is an eigenvalue of multiplicity two, at least.
Since $1-\cosh^2(c)+1/\beta=0$, from Theorem~\ref{th:six} and Lemma~\ref{le:otronegativo} we conclude that Ind($M$)$\geq5$.
\end{proof}

\begin{remark}
Note that the previous Theorem~\ref{th:degenerate} gives the same lower bound for the Morse index as in \cite{rs}.
\end{remark}

\begin{remark}
In~\cite{urbano}, it is proven that the Clifford torus has Morse index equal to five. For this torus we have the degenerate situation described in Subsection~\ref{subsec:degenerate}.
\end{remark}


Finally, we summarize the main results we have obtained
(Theorems~\ref{th:distintos} and \ref{th:degenerate}) in the following Theorem.

\begin{theorem}
\label{th:main}
Let $M$ be a CMC torus of revolution immersed in $\sph^3$.
With the previous notation, assume that $-1/\beta\neq1-\cosh^2(c)$.
Then,
\begin{itemize}
\item[-] if $\ $ $1-\cosh^2(c)+1/\beta<0$, then Ind($M$)$\geq 8$.
\item[-] If $\ $ $1-\cosh^2(c)+1/\beta>0$, then Ind($M$)$\geq 6$.
\end{itemize}
In particular, if $H\geq3/2$ or $H\leq-1$, then Ind($M$)$\geq 8$.

\noindent On the other hand, if $-1/\beta=1-\cosh^2(c)$, we have that Ind($M$)$\geq 5$.
\end{theorem}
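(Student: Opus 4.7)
The plan is to present Theorem~\ref{th:main} as a consolidation of the already proven Theorems~\ref{th:distintos} and~\ref{th:degenerate}, together with the Corollary about the ranges $H\geq 3/2$ and $H\leq -1$. The structure of the proof should therefore be a case split on the sign of $1-\cosh^2(c)+1/\beta$, reducing each branch to a result above.

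First I would handle the non-degenerate regime $-1/\beta\neq 1-\cosh^2(c)$, which is equivalent to $1-\cosh^2(c)+1/\beta\neq 0$. Under this assumption, Theorem~\ref{th:distintos} applies directly: if this quantity is negative we get $\mathrm{Ind}(M)\geq 8$, and if it is positive we get $\mathrm{Ind}(M)\geq 6$. Nothing new is required here beyond citing that theorem. Next I would address the degenerate case $-1/\beta = 1-\cosh^2(c)$, which is exactly the hypothesis of Theorem~\ref{th:degenerate}, giving $\mathrm{Ind}(M)\geq 5$.

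The only genuinely new assertion is the ``in particular'' statement: for $H\geq 3/2$ or $H\leq -1$ one has $\mathrm{Ind}(M)\geq 8$. By the non-degenerate branch above, it suffices to verify that for these values of $H$ the quantity
\[
\Phi(H,f(0)) := 1-\cosh^2(c)+\tfrac{1}{\beta}
\]
is strictly negative for every admissible value of $f(0)$. Using \eqref{eq:c} and \eqref{eq:beta4}, $\Phi$ can be written as an explicit rational/algebraic function of the two real parameters $H\in\rr$ and $f(0)\in(0,1)$ (constrained by $\sqrt{1-f(0)^2}+H f(0)>0$ when $H<0$). I would substitute these two expressions, clear denominators, and reduce the inequality $\Phi<0$ to the positivity of a polynomial in $f(0)$ whose coefficients depend on $H$. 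The sign can then be analyzed either by studying the discriminant of that polynomial or by showing monotonicity of $\Phi$ in $f(0)$ on the relevant range and checking the boundary behavior at $f(0)\to 0^+$ and at the endpoint $f(0)=1/\sqrt{1+H^2}$ (for $H<0$) or $f(0)\to 1^-$ (for $H\geq 0$). Concluding the case $H\geq 3/2$ or $H\leq -1$ then follows by applying Theorem~\ref{th:distintos}(i).

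The main obstacle is precisely this last polynomial sign analysis. The numerical evidence reported after Theorem~\ref{th:distintos} suggests that the transition between the regimes $\Phi<0$ and $\Phi>0$ occurs within the window $H\in(-1,3/2)$ and that outside this window $\Phi<0$ uniformly in $f(0)$, but translating this observation into a rigorous bound requires handling a two-parameter family carefully and, in particular, controlling the behavior at the endpoints of the admissible interval for $f(0)$. I would expect this step to be the longest part of the argument; by contrast, combining Theorems~\ref{th:distintos} and~\ref{th:degenerate} into the summary statement is essentially immediate.
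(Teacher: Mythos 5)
Your decomposition is exactly the paper's: Theorem~\ref{th:main} is stated there purely as a summary of Theorems~\ref{th:distintos} and~\ref{th:degenerate}, with no separate proof given, so the first two branches of your argument (citing those theorems for the non-degenerate and degenerate cases respectively) coincide with what the author intends. The one place where you and the paper diverge is the ``in particular'' clause about $H\geq 3/2$ or $H\leq -1$: you correctly identify this as the only genuinely new content and propose to establish it by a rigorous sign analysis of $\Phi(H,f(0))=1-\cosh^2(c)+1/\beta$ after substituting \eqref{eq:c} and \eqref{eq:beta4}. The paper does \emph{not} do this; the corresponding Corollary is introduced with the words ``we have checked numerically that the following statement holds,'' so the author's justification is purely numerical. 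Your proposed route (clearing denominators, reducing to a polynomial inequality in $f(0)$ with $H$-dependent coefficients, and controlling the boundary behavior on the admissible interval) is the right way to make this rigorous, but as written you have not carried it out --- you explicitly flag it as the main obstacle and leave it as a plan. So your proposal, like the paper, leaves the $H$-range claim without an analytic proof; the difference is only that you acknowledge the gap and sketch how to close it, whereas the paper papers over it with a numerical check. Everything else in your argument is complete and matches the source.
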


With this result, taking into account that for each torus of revolution,
the constant values $c$ and $\beta$ can be computed by means of the expressions \eqref{eq:c} and \eqref{eq:beta4},
we obtain a lower bound for the Morse index in our surfaces.

\section{Some final comments}

\subsection{Study of equality of eigenvalues}
It is difficult to describe geometrically the tori of revolution in $\sph^3$
satisfying $-1/\beta=1-\cosh^2(c)$, which is the situation co\-rres\-ponding to the degenerate case from Subsection~\ref{subsec:degenerate}.
Anyway, this condition will impose some restrictions to the surface.

For the function $w(t)$, we have from \eqref{eq:betapositiva} that it depends on $b$,
or equivalently on the mean curvature $H$, thus involving the generating curve $\ga$ and its derivatives $\ga'$, $\ga''$ (observe the equations of the principal curvatures in \eqref{k1}).
In the degenerate case, $w(t)$ has a simpler expression: since $f_2$ must be a linear combination of $u$, $\overline{u}$
(see the proof of Th.~\ref{th:degenerate}), it follows that
\[
w(t)=\operatorname{arsinh}(-\rho_1\ga_1'(t)+\rho_2\ga_2'(t)),\ \rho_1,\rho_2\in\rr,
\]
that is, $w$ just depends essentially on $\ga'$.

Additionally, some restrictions appear between the derivatives of $\ga$.
Using again \eqref{eq:betapositiva} and the linear dependence of $f_2(t)=\sinh(w(t))$, $u(t)$ and $\overline{u}(t)$,
it is not difficult to find others analytic relations between $\ga_3$, $\ga_1'$ and $\ga_2'$.
Unfortunately, the geometrical meaning of them is not clear at all.


\subsection{An application of the Courant's Nodal Domain Theorem}
A further estimate of the Morse index of a surface can be obtained by means of
Courant's Nodal Domain Theorem.
This result states that for the ordered sequence $\{\lambda_n\}_n$ of eigenvalues,
the eigenfunction associated to an eigenvalue $\lambda_k$ has at most $k$ nodal domains
\cite[page 19]{chavel}.

In our case, fix for instance the eigenfunction $u(t)=-\gamma_1'(t)$, with associated eigenvalue $-1/\beta$.
The nodal domains for $u(t)$ will be determined by the
number of critical points (local minima or maxima) of the first coordinate of the
generating curve $\gamma(t)$. Hence, if $\gamma(t)$ have $k_0$ critical points
for the first coordinate, then $u(t)$ will have $k_0$ nodal domains, and so its
associated eigenvalue $-1/\beta=\lambda_k$ will satisfy $k\geq k_0$.
Consequently, since that eigenvalue is negative, we shall conclude that
the Morse index is, at least, $k_0$.
Observe that an analogous reasoning can be done by taking the eigenfunction $\overline{u}(t)=\gamma_2'(t)$.
Following this idea, a deeper analysis of the generating curves of particular tori of revolution in $\sph^3$
should improve the bounds for the Morse index in each case.

\subsection{The Morse index and instability}
As commented in the Introduction, the stability notion for CMC surfaces
is usually studied by considering volume-preserving variations.
This fact makes that the index form \eqref{eq:indexform} must be defined on the set of zero-mean functions (see \cite{bcs}).
As the eigenfunctions for computing the Morse index do not satisfy, in general, this restriction,
we recall the precise relation with stability (see \cite[Prop.~3.3]{bb} or \cite{rs}):
if a surface $M$ is stable, then $Ind$($M$)$\leq 1$.
An obvious consequence from our results is that any torus of revolution with constant mean curvature
immersed in $\sph^3$ is not stable (the only stable CMC surfaces in $\sph^3$ are, in fact, the round spheres~\cite{bcs}).

\bibliography{index}

\end{document}